\newtheorem{thm}{Theorem}[]
\newtheorem{lem}[thm]{Lemma}
\newtheorem{lem-def}[thm]{Lemma-Definition}
\theoremstyle{remark}
\newtheorem{rmk}{Remark}[section]
\theoremstyle{definition}
\numberwithin{equation}{section}
\newcommand{\quash}[1]{}  
\newcommand{\nc}{\newcommand}
\nc{\on}{\operatorname}
\newcommand{\frakb}{{\mathfrak b}}
\newcommand{\frakg}{{\mathfrak g}}
\newcommand{\frakl}{{\mathfrak l}}
\newcommand{\fraks}{{\mathfrak s}}
\newcommand{\frakZ}{{\mathfrak Z}}
\newcommand{\bbA}{{\mathbb A}}
\newcommand{\bbC}{{\mathbb C}}
\newcommand{\bbG}{{\mathbb G}}
\newcommand{\bbM}{{\mathbb M}}
\newcommand{\bbP}{{\mathbb P}}
\newcommand{\bbW}{{\mathbb W}}
\newcommand{\calA}{{\mathcal A}}
\newcommand{\calB}{{\mathcal B}}
\newcommand{\calD}{{\mathcal D}}
\newcommand{\calE}{{\mathcal E}}
\newcommand{\calF}{{\mathcal F}}
\newcommand{\calG}{{\mathcal G}}
\newcommand{\calL}{{\mathcal L}}
\newcommand{\calO}{{\mathcal O}}
\newcommand{\calV}{{\mathcal V}}
\nc{\al}{{\alpha}} \nc{\be}{{\beta}}
\nc{\ve}{{\varepsilon}} \nc{\Ga}{{\Gamma}}
\newcommand{\la}{{\lambda}}
\nc{\La}{{\Lambda}}
\nc{\ad}{{\on{ad}}}
\nc{\Adm}{{\on{Adm}}} \nc{\aff}{{\on{aff}}}
\nc{\Aff}{{\mathbf{Aff}}}
\newcommand{\Aut}{{\on{Aut}}}
\nc{\Bun}{{\on{Bun}}}
\nc{\der}{{\on{der}}}
\nc{\diag}{{\on{diag}}}
\newcommand{\End}{{\on{End}}}
\nc{\Fl}{{\calF\ell}}
\newcommand{\Gr}{{\on{Gr}}}
\newcommand{\Hom}{{\on{Hom}}}
\nc{\IC}{{\on{IC}}}
\nc{\Id}{{\on{Id}}}
\nc{\Ind}{{\on{Ind}}}
\newcommand{\Lie}{{\on{Lie\ \!\!}}}
\nc{\res}{{\on{res}}}
\newcommand{\Spec}{{\on{Spec\ \!\!}}}
\nc{\tr}{{\on{tr}}}
\nc{\GSp}{{\on{GSp}}} \nc{\GU}{{\on{GU}}}
\nc{\SL}{{\on{SL}}} \nc{\SU}{{\on{SU}}} \nc{\SO}{{\on{SO}}}
\nc{\bFl}{{\overline{\Fl}}} \nc{\bU}{{\overline{U}}}
\nc{\wGr}{{\widetilde{\Gr}}} \nc{\wJG}{{\widetilde{\calL^+\calG}}}
\nc{\wLG}{{\widetilde{\calL\calG}}} \nc{\Op}{{\on{Op}}}
\nc{\crit}{{\on{crit}}} \nc{\Vac}{{\on{Vac}}}
\nc{\ppars}{(\!(s)\!)}
\title{Frenkel-Gross' irregular connection and Heinloth-Ng\^{o}-Yun's are the same}
\author{Xinwen Zhu}
\thanks{Partially supported by NSF grant DMS10-01280.}
\begin{document}
\maketitle

We show that the irregular connection on $\bbG_m$ constructed by
Frenkel-Gross (\cite{FG}) and the one constructed by
Heinloth-Ng\^{o}-Yun (\cite{HNY}) are the same, which confirms the
Conjecture 2.14 of \cite{HNY}.

The proof is simple, modulo the big machinery of quantization of
Hitchin's integrable systems as developed by Beilinson-Drinfeld
(\cite{BD}). The idea is as follows. Let $\calE$ be the irregular
connection on $\bbG_m$ as constructed by Frenkel-Gross. It admits a
natural oper form. We apply the machinery of Beilinson-Drinfeld to
produce an automorphic D-module on the corresponding moduli space of
$G$-bundles, with Hecke eigenvalue $\calE$. We show that this
automorphic D-module is equivariant with respect to the unipotent
group $I(1)/I(2)$ (see \cite{HNY} for the notation) against the
non-degenerate additive character $\Psi$. By the uniqueness of such
D-modules on the moduli space, one knows that the automorphic
D-module constructed using the Beilinson-Drinfeld machinery is the
same as the automorphic D-module explicitly constructed by
Heinloth-Ng\^{o}-Yun. Since the irregular connection on $\bbG_m$
constructed in \cite{HNY} is by definition the Hecke-eigenvalue of
this automorphic D-module, it is the same as $\calE$.

\section{Recollection of \cite{BD}}\label{recon} We begin with the review of the main results of
Beilinson-Drinfeld (\cite{BD}). We take the opportunity to describe
a slightly generalized (and therefore weaker) version of \cite{BD}
in order to deal with the level structures.

Let $G$ be a simple, simply-connected complex Lie group, with Lie
algebra $\frakg$ and the Langlands dual Lie algebra ${^L}\frakg$.
Let $X$ be a smooth projective algebraic curve over $\bbC$. For
every closed point $x\in X$, let $\calO_x$ be the completed local
ring of $X$ at $x$ and let $F_x$ be its fractional field. Let
$D_x=\Spec \calO_x$ and $D_x^\times=\Spec F_x$. In what follows, for
an affine (ind-)scheme $T$, we denote by $\on{Fun} T$ the
(pro)-algebra of regular functions on $T$.

Let $\calG$ be an integral model of $G$ over $X$, i.e. $\calG$ is a
(fiberwise) connected smooth affine group scheme over $X$ such that
$\calG_{\bbC(X)}=G_{\bbC(X)}$, where $\bbC(X)$ is the function field
of $X$. Let $\Bun_\calG$ be the moduli stack of $\calG$-torsors on
$X$. The canonical sheaf $\omega_{\Bun_\calG}$ is a line bundle on
$\Bun_\calG$. As $G$ is assumed to be simply-connected, we have
\begin{lem}
There is a unique line bundle $\omega_{\Bun_\calG}^{1/2}$ over $\Bun_\calG$, such that
$(\omega_{\Bun_\calG}^{1/2})^{\otimes 2}\simeq\omega_{\Bun_\calG}$.
\end{lem}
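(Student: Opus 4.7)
My plan is to derive both existence and uniqueness from the structure of the Picard group of $\Bun_\calG$. For uniqueness, if $L_1$ and $L_2$ are two line bundles with $L_i^{\otimes 2}\simeq\omega_{\Bun_\calG}$, then $L_1\otimes L_2^{-1}$ is a $2$-torsion element of $\on{Pic}(\Bun_\calG)$; so uniqueness reduces to the assertion that $\on{Pic}(\Bun_\calG)$ has no $2$-torsion. First I would verify this for the constant group scheme $\calG=G\times X$ using Faltings' theorem (also due to Beauville--Laszlo--Sorger, Kumar--Narasimhan--Ramanathan) which gives $\on{Pic}(\Bun_G)\simeq\bbZ$ when $G$ is simple and simply-connected. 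For a general integral model $\calG$, since $\calG_{\bbC(X)}=G_{\bbC(X)}$ the two group schemes differ only at a finite set $S\subset X$; I would appeal to Heinloth's extension of Faltings' result, which computes $\on{Pic}(\Bun_\calG)$ as an extension of $\on{Pic}(\Bun_G)$ by a free abelian group of characters of the reductive quotients of the local group schemes $\calG(\calO_x)$ for $x\in S$ (again torsion-free because $G$ is simply-connected).

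For existence I would exhibit a natural square root. Deformation theory identifies $\omega_{\Bun_\calG}^{-1}$ with the determinant of cohomology of the adjoint bundle $\ad\calP$ as $\calP$ varies over $\Bun_\calG$. The Killing form provides a $\calG$-invariant symmetric nondegenerate pairing on $\Lie\calG$, making $\ad\calP$ self-dual (up to a twist by $\omega_X$, which is already a square, namely of $\omega_X^{1/2}$ after choosing a theta-characteristic—but in fact one can avoid this choice because only the parity matters). The Pfaffian-of-cohomology construction for a vector bundle with a nondegenerate symmetric form then produces a canonical line bundle whose square is the determinant of cohomology; this is the desired $\omega_{\Bun_\calG}^{1/2}$. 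Equivalently, in the language of level, for constant $\calG$ the canonical bundle lies at level $2h^\vee$ (twice the dual Coxeter number) in $\on{Pic}(\Bun_G)\simeq\bbZ$, hence is evidently divisible by $2$; and the contribution from the bad points at $S$ is encoded in even characters of the reductive quotients.

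The main obstacle is the non-constant case: one must keep track of the canonical bundle through the local modifications of $\Bun_G$ to $\Bun_\calG$ and check that the correction to the level-$2h^\vee$ class lies in $2\cdot\on{Pic}(\Bun_\calG)$. This is where the assumption that $G$ is simply-connected is essential, since it forces the character lattices at the bad points to be torsion-free and allows the parity computation to go through uniformly. Once these two ingredients—torsion-freeness of $\on{Pic}(\Bun_\calG)$ and $2$-divisibility of $[\omega_{\Bun_\calG}]$—are established, the lemma follows immediately.
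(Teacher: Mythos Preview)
The paper states this lemma without proof, so there is no argument to compare against; I evaluate your proposal on its own merits.

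Your overall architecture---reduce uniqueness to torsion-freeness of $\Pic(\Bun_\calG)$ and existence to $2$-divisibility of the class $[\omega_{\Bun_\calG}]$, then invoke Faltings and Heinloth for the Picard computation---is the right one, and in the constant case the Pfaffian/level argument is standard and correct.

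There is, however, a genuine error in the existence step for non-constant $\calG$. You assert that ``the Killing form provides a $\calG$-invariant symmetric nondegenerate pairing on $\Lie\calG$, making $\ad\calP$ self-dual.'' This fails at the ramified points. For instance, at a point with Iwahori level the fibre of $\Lie\calG$ is a Borel subalgebra $\frakb$, and the restriction of the Killing form to $\frakb$ has the nilradical in its kernel; more generally the Killing pairing $\Lie\calG\times\Lie\calG\to\calO_X$ is only generically perfect, with degeneracy supported on the bad set $S$. Without a perfect pairing the Pfaffian-of-cohomology construction does not produce a line bundle whose square is the determinant, so this route does not directly furnish $\omega_{\Bun_\calG}^{1/2}$.

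You are evidently aware of the difficulty, since you fall back on the ``level'' description and flag the non-constant case as the main obstacle. But the sentence ``the contribution from the bad points at $S$ is encoded in even characters of the reductive quotients'' is an assertion, not an argument: one must actually compute the class of $\omega_{\Bun_\calG}$ in Heinloth's presentation of $\Pic(\Bun_\calG)$ (central charge plus local character data) and check the parity of each local term. Concretely, for a parahoric $\calG(\calO_x)$ with reductive quotient $M_x$, the local contribution to $[\omega_{\Bun_\calG}]$ is governed by the determinant of the $M_x$-action on the pro-unipotent radical, and one verifies that this character is a square in $\xch(M_x)$ when $G$ is simply-connected (e.g.\ because the relevant weights come in pairs, or because $\xch(Z(M_x))$ has no $2$-torsion and $2\rho_{M_x}$ is already even). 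Filling in that computation is what is missing from your sketch; once it is done, both ingredients are in place and the lemma follows.
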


Now we assume that $\Bun_\calG$ is ``good" in the sense of
Beilinson-Drinfeld, i.e.
$$\dim T^*\Bun_\calG=2\dim\Bun_\calG.$$
In this case one can construct the D-module of the sheaf of
critically twisted (a.k.a. $\omega_{\Bun_\calG}^{1/2}$ twist)
differential operators on the smooth site $(\Bun_\calG)_{sm}$ of
$\Bun_\calG$, denoted by $\calD'$. Let $D'=(\underline{\End}
\calD')^{op}$ be the sheaf of endomorphisms of $\calD'$ as a twisted
D-module. Then $D'$ is a sheaf of associative algebra on
$(\Bun_\calG)_{sm}$ and $D'\simeq (D')^{op}$. For more details, we
refer to \cite[\S 1]{BD}.

Recall the definition of opers on a curve (cf. \cite[\S 3]{BD}). Let
$\on{Op}_{{^L}\frakg}(D^\times_x)$ be the ind-scheme of
${^L}\frakg$-opers on the punctured disc $D^\times_x$. Then there is
a natural ring homomorphism
\begin{equation}\label{hx}
h_x:\on{Fun}\on{Op}_{{^L}\frakg}(D^\times_x)\to
\Gamma(\Bun_\calG,D').
\end{equation}
Let us briefly recall its definition. Let $\Gr_{\calG,x}$ be the
affine Grassmannian, which is an ind-scheme classifying pairs
$(\calF,\beta)$, where $\calF$ is a $\calG$-torsor on $X$ and
$\beta$ is a trivialization of $\calF$ away from $x$. Then we have
$\Gr_{\calG,x}\simeq G(F_x)/K_x$, where $K_x=\calG(\calO_x)$. Let
$\calL_{\crit}$ be the pullback of the line bundle
$\omega_{\Bun_G}^{1/2}$ on $\Bun_\calG$ to $\Gr_{\calG,x}$, and let
$\delta_e$ be the delta $D$-module on $\Gr_{\calG,x}$ twisted by
$\calL_{\crit}$. Let
\[\Vac_x:=\Gamma(\Gr_{\calG,x},\delta_e)\]
be the vacuum $\hat{\frakg}_{\crit,x}$-module at the critical level.

\begin{rmk}The module $\Vac_x$ is not always isomorphic to $\Ind_{\Lie
K_x+\bbC\bf{1}}^{\hat{\frakg}_{\crit,x}}(\on{triv})$, due to the
twist by $\calL_{\crit}$. For example, if $K_x$ is an Iwahori
subgroup,
\[\Vac_x=\Ind_{\Lie
K_x+\bbC\bf{1}}^{\hat{\frakg}_{\crit,x}}(\bbC_{-\rho}),\] is the
Verma module of highest weight $-\rho$ ($-\rho$ is anti-dominant
w.r.t. the chosen $K_x$).
\end{rmk}

Let $\Bun_{\calG,x}$ be the scheme classifying pairs
$(\calF,\beta)$, where $\calF$ is a $\calG$-torsor on $X$ and
$\beta$ is a trivialization of $\calF$ on $D_x=\Spec \calO_x$. It
admits a $(\hat{\frakg}_{\crit,x},K_x)$ action, and
$\Bun_{\calG,x}/K_x\simeq \Bun_\calG$. Now applying the standard
localization construction to the Harish-Chandra module $\on{Vac}_x$
(cf. \cite[\S 1]{BD}) gives rise to
\[\on{Loc}(\on{Vac}_x)\simeq\calD'\]
as critically twisted $D$-modules on $\Bun_\calG$. Recall that the
center $\frakZ_x$ of the category of smooth
$\hat{\frakg}_{\on{crit},x}$-modules is isomorphic to
$\on{Fun}\on{Op}_{{^L}\frakg}(D^\times_x)$ by the Feigin-Frenkel
isomorphism (\cite[\S 3.2]{BD}, \cite{F}). The mapping $h_x$ then is
the composition
\[\on{Fun}\on{Op}_{{^L}\frakg}(D^\times_x)\simeq\frakZ_x\to\End(\on{Vac}_x)\to\End (\on{Loc}(\on{Vac}_x))\simeq\Gamma(\Bun_\calG,D').\]
If $\calG$ is unramified at $x$, then $h_x$ factors as
\[h_x: \on{Fun}\Op_{^{L}\frakg}(D_x^\times)\twoheadrightarrow\on{Fun}\Op_{^{L}\frakg}(D_x)\simeq\End(\Vac_x)\to\Gamma(\Bun_{\calG},D'),\]
where $\Op_{^{L}\frakg}(D_x)$ is the scheme (of infinite type) of
${^L}\frakg$-opers on $D_x$.

The mappings $h_x$ can be organized into a horizontal morphism $h$
of $\calD_X$-algebras over $X$ (we refer to \cite[\S 2.6]{BD} for
the generalities of $\calD_X$-algebras). Let us recall the
construction. By varying $x$ on $X$, the affine Grassmannian
$\Gr_{\calG,x}$ form an ind-scheme $\Gr_\calG$ formally smooth over
$X$. Let $\pi:\Gr_\calG\to X$ be the projection and
$e:X\to\Gr_\calG$ be the unital section given by the trivial
$\calG$-torsor. Let $\delta_e$ be the delta $D$-module along the
section $e$ twisted by $\calL_{\crit}$. Then we have a chiral
algebra
\[\calV ac_X:=\pi_!(\delta_e).\]
over $X$ whose fiber over $x$ is $\Vac_x$. \quash{As $\Gr_\calG\to
X$ is formally smooth,}
\begin{lem}The sheaf
$\calV ac_X$ is flat as an $\calO_X$-module.
\end{lem}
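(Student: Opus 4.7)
The plan is simply to use the formal smoothness of $\pi:\Gr_\calG\to X$.

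Intuitively, formal smoothness means that in a formal neighborhood of $e(X)$ the ind-scheme $\Gr_\calG$ looks like a product $X\times\Gr_{\calG,x_0}$ compatibly with the section $e$. Under such a (formal) trivialization $\delta_e$ becomes the exterior product $\calO_X\boxtimes\delta_{e_0}$, so
\[
\pi_!\delta_e\;\simeq\;\Vac_{x_0}\otimes_\bbC\calO_X,
\]
which is manifestly flat over $\calO_X$. This already gives the right picture fiber-by-fiber, and the goal of the proof is to make it globally rigorous.

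To do so I would introduce the natural order filtration $F_\bullet$ on $\delta_e$ by the order of normal derivatives along $e(X)$, and identify its associated graded. Formal smoothness gives a well-defined (ind-)normal sheaf $N:=e^*T_{\Gr_\calG/X}$, and since $\calG$ is a smooth affine group scheme over $X$ this normal sheaf is ind-locally-free over $\calO_X$ (concretely, $N\simeq\Lie\calL\calG/\Lie\calL^+\calG$). Pushing the filtration forward along $\pi$ (which restricts to $\on{id}_X$ on $e(X)$) yields a filtration $F_\bullet\calV ac_X$ with
\[
\on{gr}_n\calV ac_X\;\simeq\;\on{Sym}^n N\otimes_{\calO_X}e^*\calL_{\crit},
\]
an ind-locally-free, hence flat, $\calO_X$-module. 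By induction on $n$ every $F_n\calV ac_X$ is flat (extensions of flats are flat), and $\calV ac_X=\on{colim}_n F_n\calV ac_X$ is flat as a filtered colimit of flat $\calO_X$-modules.

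The only real technical step is justifying the order filtration on $\delta_e$ and the identification of its associated graded in the relative, infinite-dimensional setting; this is the relative PBW identification in the chiral-algebra formalism, and it reduces fiberwise over $x\in X$ to the classical isomorphism $\on{gr}\Vac_x\simeq\on{Sym}(\frakg\ppart/\frakg[[t]])$ (twisted by the restriction of $\calL_{\crit}$). Everything else is formal.
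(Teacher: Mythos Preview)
The paper does not actually prove this lemma; it is stated without proof. The only trace of an argument is a suppressed clause immediately preceding the statement (hidden by the paper's \texttt{quash} macro): ``As $\Gr_\calG\to X$ is formally smooth,'' --- indicating that the author regarded formal smoothness of $\pi$ as the essential input and left the details to the reader.

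Your proposal is therefore not competing with a written proof but supplying one, and it does so along exactly the line the paper gestures at. The PBW/order filtration on $\delta_e$ with ind-locally-free associated graded is the standard way to extract $\calO_X$-flatness from formal smoothness in this setting, and your outline is correct. One small simplification you could note: since $X$ is a smooth curve, an $\calO_X$-module is flat if and only if it is torsion-free, so it would suffice to check that no nonzero local section of $\calV ac_X$ is annihilated by a nonzero function; your formal product picture $\calO_X\boxtimes\delta_{e_0}$ near any point of $X$ already gives this without invoking the full filtration argument. Either route works; yours is the more structural one and would generalize beyond the curve case.
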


For any chiral algebra $\calA$ over a curve, one can associate the
algebra of its endomorphisms, denoted by $\calE nd(\calA)$. As
sheaves on $X$,
\[\calE nd(\calA)=\Hom_\calA(\calA,\calA),\]
where $\Hom$ is taken in the category of chiral $\calA$-modules.
Obviously, $\calE nd(\calA)$ is an algebra by composition. Less
obviously, there is a natural chiral algebra structure on $\calE
nd(\calA)\otimes\omega_X$ which is compatible with the algebra
structure. Therefore, $\calE nd(\calA)$ is a commutative
$\calD_X$-algebra. If $\calA$ is $\calO_X$-flat, there is a natural
injective mapping $\calE nd(\calA)_x\to \End(\calA_x)$ which is not
necessarily an isomorphism in general, where $\End(\calA_x)$ is the
endomorphism algebra $\calA_x$ as a chiral $\calA$-module. However,
this is an isomorphism if there is some open neighborhood $U$
containing $x$ such that $\calA|_U$ is constructed from a vertex
algebra. We refer to \cite{N} for details of the above discussion.

Let $U\subset X$ be an open subscheme such that $\calG|_U\simeq
G\times U$, then by the above generality, the Feigin-Frenkel
isomorphism gives rise to
\[\Spec\ \calE nd(\calV ac_U)\simeq \on{Op}_{{^L}\frakg}|_U,\]
where $\on{Op}_{{^L}\frakg}$ is the $\calD_X$-scheme over $X$, whose
fiber over $x\in X$ is the scheme of ${^L}\frakg$-opers on $D_x$.
Recall that for a commutative $\calD_U$-algebra $\calB$, we can take
the algebra of its horizontal sections $H_\nabla(U,\calB)$ (or
so-called conformal blocks) \cite[\S 2.6]{BD}, which is usually a
topological commutative algebra. For example,
\[\Spec
H_{\nabla}(U,\on{Op}_{{^L}\frakg})=\on{Op}_{{^L}\frakg}(U)\] is the
ind-scheme of ${^L}\frakg$-opers on $U$ (\cite[\S 3.3]{BD}). As
$H_{\nabla}(U,\calE nd(\calV ac_U))\to H_{\nabla}(X,\calE nd(\calV
ac_X))$ is surjective, we have a closed embedding
\[\Spec H_{\nabla}(X,\calE nd(\calV ac_X))\to\on{Op}_{{^L}\frakg}(U).\]
Let $\Op_{{^L}\frakg}(X)_\calG$ denote the image of this closed
embedding. This is a subscheme (rather than an ind-scheme) of
$\on{Op}_{{^L}\frakg}(U)$.

On the other hand, as argued in \cite[\S 2.8]{BD}, the mapping $h_x$
is of crystalline nature so that it induces a mapping of
$\calD_X$-algebras
\begin{equation}\label{hglob}
h: \calE nd(\calV ac_X)\to\Gamma(\Bun_\calG,D')\otimes\calO_X,
\end{equation}
which induces a mapping of horizontal sections
\begin{equation}\label{hhorizontal}
h_{\nabla}:H_{\nabla}(X,\calE nd(\calV
ac_X))\to\Gamma(\Bun_{\calG},D').
\end{equation}
Therefore, \eqref{hhorizontal} can be rewrite as a mapping
\begin{equation}\label{hhorizontal1}h_{\nabla}:\on{Fun}\on{Op}_{{^L}\frakg}(X)_{\calG}\to
\Gamma(\Bun_{\calG},D').\end{equation}

We recall the characterization $\on{Op}_{{^L}\frakg}(X)_{\calG}$.
\begin{lem}Let $X\setminus U=\{x_1,\ldots,x_n\}$.
Assume that the support of $\Vac_{x_i}$ (as an
$\frakZ_{x_i}$-module) is
$Z_{x_i}\subset\on{Op}_{{^L}\frakg}(D^\times_{x_i})$ (i.e.
$\on{Fun}(Z_{x_i})=\on{Im} (\on{Op}_{{^L}\frakg}(D_x^\times)\to
\End(\Vac_x))$). Then $$\on{Op}_{{^L}\frakg}(X)_{\calG}\simeq
\on{Op}_{{^L}\frakg}(U)\times_{\prod_i\on{Op}_{{^L}\frakg}(D^\times_{x_i})}\prod
Z_{x_i}.$$
\end{lem}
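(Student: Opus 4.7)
The strategy is a local-to-global computation of $H_\nabla(X,\calE nd(\calV ac_X))$ along the cover of $X$ by $U$ and the formal disks $D_{x_i}$. I would first identify the three local factors. On $U$, where $\calG|_U\simeq G\times U$, the chiral algebra $\calV ac_X|_U$ comes from the vacuum vertex algebra, so the Feigin--Frenkel isomorphism already invoked in the text gives
$$\Spec H_\nabla(U,\calE nd(\calV ac_U))\simeq \on{Op}_{{^L}\frakg}(U).$$
On each punctured disk $D^\times_{x_i}$, the group scheme $\calG$ is again trivializable (its generic fiber is $G$ and $\bbC$ is algebraically closed), so the same argument yields $\Spec H_\nabla(D^\times_{x_i},\calE nd(\calV ac_X)|_{D^\times_{x_i}})\simeq \on{Op}_{{^L}\frakg}(D^\times_{x_i})$. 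Over the full disk $D_{x_i}$, $\calV ac_X$ need not arise from a vertex algebra, but the injection $\calE nd(\calV ac_X)_{x_i}\hookrightarrow \End(\Vac_{x_i})$ together with the Feigin--Frenkel identification of the center $\frakZ_{x_i}$ lets one identify horizontal sections over $D_{x_i}$ with the image of $\frakZ_{x_i}\to \End(\Vac_{x_i})$. By the support hypothesis this image is $\on{Fun}(Z_{x_i})$, so
$$\Spec H_\nabla(D_{x_i},\calE nd(\calV ac_X)|_{D_{x_i}})\simeq Z_{x_i}.$$

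Combining the three pieces via the $\calD_X$-algebra structure of $\calE nd(\calV ac_X)$---a horizontal section over $X$ is the same datum as compatible horizontal sections over $U$ and over each $D_{x_i}$, agreeing under restriction to the overlaps $D^\times_{x_i}$---then produces the identification
$$\Spec H_\nabla(X,\calE nd(\calV ac_X))\simeq \on{Op}_{{^L}\frakg}(U)\times_{\prod_i \on{Op}_{{^L}\frakg}(D^\times_{x_i})}\prod_i Z_{x_i},$$
the right-hand side being the preimage of $\prod_i Z_{x_i}$ in $\on{Op}_{{^L}\frakg}(U)$ since each $Z_{x_i}\hookrightarrow \on{Op}_{{^L}\frakg}(D^\times_{x_i})$ is a closed embedding by construction. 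By its defining property as the image of $\Spec H_\nabla(X,\calE nd(\calV ac_X))\hookrightarrow \on{Op}_{{^L}\frakg}(U)$, this identifies $\on{Op}_{{^L}\frakg}(X)_\calG$ with the stated fiber product.

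The main obstacle is the disk computation over $D_{x_i}$: one must promote the mere injection $\calE nd(\calV ac_X)_{x_i}\hookrightarrow \End(\Vac_{x_i})$ into a genuine identification of horizontal sections of $\calE nd(\calV ac_X)|_{D_{x_i}}$ with the image of the center $\frakZ_{x_i}$ acting on $\Vac_{x_i}$. This requires matching the chiral structure on $\calE nd(\calV ac_X)$ near the ramified point with the $\frakZ_{x_i}$-action on $\Vac_{x_i}$, using the formal smoothness of $\Gr_\calG\to X$ and the compatibility of the Feigin--Frenkel isomorphism with the chiral-algebra formalism of \cite{BD}. Once this is granted, the support hypothesis immediately identifies the image with $\on{Fun}(Z_{x_i})$, and the gluing argument above yields the lemma.
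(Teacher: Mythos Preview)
The paper does not actually prove this lemma: it is introduced with the phrase ``We recall the characterization\ldots'' and is stated without proof, serving as background input to the construction rather than something argued in the text. So there is no argument in the paper to compare against.

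That said, your strategy is the natural one and is essentially what such a proof would look like. The identification over $U$ is exactly what the paper already records, and the fiber-product gluing of horizontal sections along $X = U \cup \bigcup_i D_{x_i}$ is the standard local-to-global principle for commutative $\calD_X$-algebras (dually: a global oper is a $U$-oper whose restriction to each $D^\times_{x_i}$ satisfies the local constraint). You have correctly isolated the one nontrivial point, namely that $H_\nabla(D_{x_i},\calE nd(\calV ac_X)|_{D_{x_i}})$ coincides with the image of $\frakZ_{x_i}\to\End(\Vac_{x_i})$ rather than with some proper subalgebra. The paper's earlier remark that $\calE nd(\calA)_x\hookrightarrow\End(\calA_x)$ need not be surjective away from the vertex-algebra locus is exactly the reason this step requires work; what saves you is that the center $\frakZ_{x_i}$ acts through chiral endomorphisms already over the punctured disk (where $\calV ac_X$ \emph{does} come from a vertex algebra), and these extend across $x_i$ by flatness of $\calV ac_X$. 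Making that extension argument precise is the content of the step you flagged, and once it is in hand the rest is formal.
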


The mapping \eqref{hhorizontal1} is a quantization of a classical
Hitchin system. Namely, there is a natural filtration (\cite[\S
3.1]{BD}) on the algebra $\on{Fun}\on{Op}_{{^L}\frakg}(U)$ whose
associated graded is the algebra of functions on the classical
Hitchin space
$$\on{Hitch}(U)=\bigoplus_i\Gamma(U,\Omega^{d_i+1})$$
where $d_i$s are the exponent of $\frakg$ and $\Omega$ is the
canonical sheaf of $X$. On the other hand, there is a natural
filtration on $\Gamma(\Bun_\calG,D')$ coming from the order of the
differential operators. Then \eqref{hhorizontal1} is strictly
compatible with the filtration and the associated graded map gives
rise to the classical Hitchin map
\[h^{cl}: T^*\Bun_\calG\to \on{Hitch}(U).\]
\begin{rmk}The above map $h^{cl}$ factors through certain closed
subscheme $\on{Hitch}(X)_\calG\subset\on{Hitch}(U)$ whose algebra of
functions is the associated graded of
$\on{Fun}\on{Op}_{{^L}\frakg}(X)_\calG$.
\end{rmk}

The following theorem summarizes the main results of \cite{BD}.
\begin{thm}\label{BDmain}
Let $\chi\in
\on{Op}_{{^L}\frakg}(X)_{\calG}\subset\on{Op}_{{^L}\frakg}(U)$ be a
closed point, which gives rise to a ${^L}\frakg$-oper $\calE$ on
$U$. Let
$\varphi_\chi:\on{Fun}\on{Op}_{{^L}\frakg}(X)_{\calG}\to\bbC$ be the
corresponding homomorphism of $\bbC$-algebras. Then
\[\Aut_{\calE}:=(\calD'\otimes_{\on{Fun}\on{Op}_{{^L}\frakg}(X)_{\calG},\varphi_\chi}\bbC)\otimes\omega_{\Bun_\calG}^{-1/2}\]
is a Hecke-eigensheaf on $\Bun_\calG$ with respect to $\calE$
(regarded as a ${^L}G$-local system).
\end{thm}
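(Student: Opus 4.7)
The plan is to transport the identification $\calD'\simeq\on{Loc}(\Vac_x)$ through the specialization $\varphi_\chi$, and then extract the Hecke eigensheaf property from the behaviour of the center on convolutions of $\delta_e$ with Satake sheaves.

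First I would check that specialization commutes with localization. For any $x\in U$, the map $h_x$ factors through $\on{Fun}\Op_{{^L}\frakg}(D_x)\to\End(\Vac_x)$, so the quotient $\Vac_x/\chi$ is a well-defined $\hat{\frakg}_{\crit,x}$-module; combining right-exactness of $\on{Loc}$ with $\calO_X$-flatness of $\calV ac_X$ should yield
$$\calD'\otimes_{\on{Fun}\Op_{{^L}\frakg}(X)_\calG,\varphi_\chi}\bbC\simeq\on{Loc}(\Vac_x/\chi).$$
Twisting by $\omega_{\Bun_\calG}^{-1/2}$ then makes $\Aut_\calE$ an honest (untwisted) D-module on $\Bun_\calG$.

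Next I would reinterpret the Hecke functor in this localization picture. For $V\in\on{Rep}({^L}G)$ and $x\in U$, the Hecke modification at $x$ corresponds, on the $\hat{\frakg}_{\crit,x}$-module side, to convolving $\delta_e$ with the geometric Satake sheaf $\IC_V$ on $\Gr_{\calG,x}$. The global sections of these convolutions assemble, as $x$ varies over $U$, into Frenkel's family of ``Weyl modules'' over $U$ whose fiber at $x$ has underlying space $V\otimes\Vac_x$ and on which the center $\frakZ_x$ acts through the tautological oper on $D_x$. Localizing this family on $\Bun_\calG$ reproduces the Hecke modification $H_V(\calD')$ on $U\times\Bun_\calG$.

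The hard part will be the third step: invoking the Feigin--Frenkel / Beilinson--Drinfeld structure theorem for the central action on Weyl modules. This theorem states that, after specialization by $\varphi_\chi$, the fiber of Frenkel's family at $x$ becomes canonically isomorphic, as a module over the center and over $\Vac_x/\chi$, to $V^\calE_x\otimes_\bbC(\Vac_x/\chi)$, where $V^\calE$ is the local system on $U$ obtained by applying $V$ to the ${^L}G$-monodromy of $\calE$. Once this input is granted, localizing both sides and twisting down by $\omega_{\Bun_\calG}^{-1/2}$ yields the Hecke eigensheaf relation
$$H_V(\Aut_\calE)\simeq V^\calE\boxtimes\Aut_\calE\qquad\text{on }U\times\Bun_\calG,$$
and compatibility with the tensor structure on $\on{Rep}({^L}G)$ follows from the analogous compatibility of Frenkel's construction with geometric Satake. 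Everything outside this central computation is formal: compatibilities between localization, specialization, and convolution.
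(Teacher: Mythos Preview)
The paper does not give its own proof of this theorem: it is explicitly presented as a summary of the main results of \cite{BD}, with only a remark following it on the stronger statements available in the unramified case. So there is no in-paper argument to compare against.

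That said, your sketch is a faithful outline of the Beilinson--Drinfeld strategy: identify $\calD'$ with $\on{Loc}(\Vac_x)$, realize Hecke modification at $x\in U$ as convolution with the Satake sheaf $\IC_V$ on $\Gr_{\calG,x}$, and then invoke the structure theorem for the action of the Feigin--Frenkel center on the resulting ``Weyl modules'' to see that specialization at $\chi$ produces $V^{\calE}\otimes(\Vac_x/\chi)$. One small comment: in your first step the $\calO_X$-flatness of $\calV ac_X$ is not really what is doing the work; the identification $\on{Loc}(\Vac_x)/\chi\simeq\on{Loc}(\Vac_x/\chi)$ is a consequence of right-exactness of $\on{Loc}$ alone, together with the fact that $h_x$ factors through $\End(\Vac_x)$ so that the $\on{Fun}\Op_{{^L}\frakg}(X)_\calG$-action on $\calD'$ genuinely comes from the $\frakZ_x$-action on $\Vac_x$.
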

\begin{rmk}The statement of the about theorem is weaker than the
main theorem in \cite{BD} in two aspects: (i) if $\calG$ is the
constant group scheme (the unramified case), then
$\on{Op}_{{^L}\frakg}(X)_{\calG}=\on{Op}_{{^L}\frakg}(X)$ is the
space of ${^L}\frakg$-opers on $X$. In this case, Beilinson and
Drinfeld proved that
\[\on{Fun}\on{Op}_{{^L}\frakg}(X)\simeq\Gamma(\Bun_G,D')\]
and therefore $\Aut_\calE$ is always non-zero in this case; (ii) in
the unramified case, the automorphic D-module $\Aut_\calE$ is
holonomic.

The proofs of both assertions are based on the fact that the
classical Hitchin map is a complete integrable system.  If the level
structure of $\calG$ is not deeper than the Iwahori level structure
(or even the pro-unipotent radical of the Iwahori group), then by
the same arguments, the above two assertions still holds. However,
it is not obvious from the construction that $\Aut_\calE$ is
non-zero for the general deeper level structure, although we do
conjecture that this is always the case. In addition, for arbitrary
$\calG$, the automorphic D-modules constructed as above will in
general not be holonomic. This is the reason that we need to use a
group scheme different from \cite{HNY} in what follows.
\end{rmk}

\section{} Now we specialize the group scheme $\calG$. Let $G$ be a
simple, simply-connected complex Lie group, of rank $\ell$. Let us
fix $B\subset G$ a Borel subgroup and $B^-$ an opposite Borel
subgroup. The unipotent radical of $B$ (resp. $B^-$) is denoted by
$U$ (resp. $U^-$). Following \cite{HNY}, we denote by $\calG(0,1)$
the group scheme on $\bbP^1$ obtained from the dilatation of
$G\times\bbP^1$ along $B^-\times\{0\}\subset G\times\{0\}$ and
$U\times\{\infty\}\subset G\times\{\infty\}$. Following \emph{loc.
cit.}, we denote $I(1)=\calG(0,1)(\calO_\infty)$.

Let $\calG(0,2)\to\calG(0,1)$ be the group scheme over $\bbP^1$ so
that they are isomorphic away from $\infty$ and
$\calG(0,2)(\calO_\infty)=I(2):=[I(1),I(1)]$. Then $I(1)/I(2)\simeq
\prod_{i=0}^{\ell}U_{\al_i}$, where $\al_i$ are simple affine roots,
and $U_{\al_i}$ are the corresponding root groups. Let us choose for
each $\al_i$ an isomorphism $\Psi_i:U_{\al_i}\simeq\bbG_a$. Then we
obtain a well-defined morphism
\[\Psi: I(1)\to I(1)/I(2)\simeq \prod_{i=0}^{\ell}U_{\al_i}\simeq\prod\bbG_a\stackrel{\on{sum}}{\to}\bbG_a.\]
Let $I_\Psi:=\ker\Psi\subset I(1)$.

As explained in \emph{loc. cit.}, there is an open substack of
$\Bun_{\calG(0,2)}$, which is isomorphic to $\bbG_a^{\ell+1}$. For
the application of Beilinson-Drinfeld's construction, it is
convenient to consider $\Bun_{\calG(0,\Psi)}$, where
$\calG(0,\Psi)\to\calG(0,1)$ is an isomorphism away from $\infty$
and $\calG(0,\Psi)(\calO_\infty)=I_\Psi\subset I(1)=\calG(0,1)$.
Then $\Bun_{\calG(0,2)}$ is a torsor over $\Bun_{\calG(0,\Psi)}$
under the group $I_\Psi/I(2)\cong\bbG_a^\ell$ and there is an open
substack of $\Bun_{\calG(0,\Psi)}$ isomorphic to $\bbG_a$.

\begin{lem}The stack $\Bun_{\calG(0,\Psi)}$ is good in the sense of \cite[\S 1.1.1]{BD}.
\end{lem}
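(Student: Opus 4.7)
The plan is to reduce the goodness of $\Bun_{\calG(0,\Psi)}$ to that of the Iwahori-type stack $\Bun_{\calG(0,1)}$ by exploiting two torsor relations, and then verify the latter directly using that the base curve is $\bbP^1$ and $G$ is simply-connected. Since the excerpt exhibits an open $\bbG_a$-substack of $\Bun_{\calG(0,\Psi)}$, the target equality is $\dim T^*\Bun_{\calG(0,\Psi)} = 2$.

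The torsor-invariance step rests on the following observation: if $\pi: Y \to Z$ is an $H$-torsor under a smooth affine algebraic group $H$ of dimension $n$, then locally $Y \simeq Z \times H$, so $\dim Y = \dim Z + n$ and $\dim T^*Y = \dim T^*Z + 2n$. Consequently $Y$ is good if and only if $Z$ is. The paper already records the torsor $\Bun_{\calG(0,2)} \to \Bun_{\calG(0,\Psi)}$ under $I_\Psi/I(2) \simeq \bbG_a^\ell$, and one analogously has $\Bun_{\calG(0,2)} \to \Bun_{\calG(0,1)}$ as a torsor under $I(1)/I(2) \simeq \bbG_a^{\ell+1}$, arising from $\calG(0,2) \subset \calG(0,1)$ with cokernel (a skyscraper sheaf with value $\bbG_a^{\ell+1}$) concentrated at $\infty$. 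Two applications of the invariance reduce the goodness of $\Bun_{\calG(0,\Psi)}$ to that of $\Bun_{\calG(0,1)}$.

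For the Iwahori-level case, since $G$ is simply-connected every $G$-bundle on $\bbP^1$ is trivial, so $\Bun_{\calG(0,1)}$ identifies with the quotient stack $[G\backslash (G/B^- \times G/U)]$ under the diagonal $G$-action. This stack is zero-dimensional, as the stabilizer of $(eB^-, eU)$ is $B^- \cap U = \{1\}$, giving an open orbit of the full dimension $\dim G$. Goodness then reduces to the bound $\dim \mu^{-1}(0) \leq \dim G$ for the moment map $\mu: T^*(G/B^- \times G/U) \to \frakg^*$. Using the Springer-type identifications of $T^*(G/B^-)$ and $T^*(G/U)$ as vector bundles over flag varieties with fibers isomorphic (under the Killing form) to $\Lie U^-$ and $\frakb^-$ respectively, the zero fiber of $\mu$ becomes a Steinberg-type variety whose dimension is bounded by $\dim G$ via the standard analysis using Bruhat stratification and the dimension count on the nilpotent cone of $\frakg$.

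The main obstacle will be the dimension estimate on the deeper Bruhat strata of $G/B^- \times G/U$, where the diagonal $G$-stabilizers become nontrivial. At the open cell the bound is immediate, but at the lower strata one must check stratum-by-stratum that the Steinberg-type fiber does not exceed the expected dimension. This kind of estimate is standard in the theory of parahoric Higgs bundles and follows from the irreducibility of the nilpotent cone together with the product structure of the relevant Steinberg variety.
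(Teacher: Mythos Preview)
Your torsor reduction to $\Bun_{\calG(0,1)}$ is fine, if slightly roundabout; the paper does it in one step, observing directly that $\Bun_{\calG(0,\Psi)}\to\Bun_{\calG(0,1)}$ is a $\bbG_a$-torsor under $I(1)/I_\Psi$.

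The genuine gap is in your treatment of $\Bun_{\calG(0,1)}$. The assertion ``since $G$ is simply-connected every $G$-bundle on $\bbP^1$ is trivial'' is false: by Grothendieck's theorem the isomorphism classes of $G$-bundles on $\bbP^1$ are parametrized by dominant coweights, an infinite set for any $G$ of positive rank. Simple-connectedness only guarantees that $\Bun_G(\bbP^1)$ is connected. Consequently your identification $\Bun_{\calG(0,1)}\simeq[G\backslash(G/B^-\times G/U)]$ describes only the \emph{open} substack where the underlying bundle is trivial; the Bruhat cells you analyze there are indexed by the finite Weyl group, which is only a small piece of the full picture. The points of $\Bun_{\calG(0,1)}$ are indexed by the \emph{affine} Weyl group $W_{\aff}$, and the strata with non-trivial underlying bundle (those for $w\in W_{\aff}\setminus W$) carry automorphism groups of unbounded dimension. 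Goodness is precisely the assertion that these automorphism dimensions are controlled by codimension, so the deep strata cannot be ignored; your Steinberg-type moment-map estimate on the finite flag varieties says nothing about them.

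The paper's argument addresses this head-on: the Birkhoff-type decomposition stratifies $\Bun_{\calG(0,1)}$ by $w\in W_{\aff}$, the stratum for $w$ has codimension $\ell(w)$, and its stabilizer has dimension $\ell(w)$. This equality of codimension and stabilizer dimension on every stratum is exactly the criterion for goodness of a zero-dimensional stack. To repair your approach you would need the affine analogue of the Steinberg dimension estimate, which amounts to re-deriving this same stratification fact.
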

\begin{proof}Since $\Bun_{\calG(0,\Psi)}$ is a principal bundle
over $\Bun_{\calG(0,1)}$ under the group $I(1)/I_\Psi\simeq \bbG_a$,
it is enough to show that $\Bun_{\calG(0,1)}$ is good. It is
well-known in this case $\Bun_{\calG(0,1)}$ has a stratification by
elements in the affine Weyl group of $G$ and the stratum
corresponding to $w$ has codimension $\ell(w)$ and the stabilizer
group has dimension $\ell(w)$. Therefore $\Bun_{\calG(0,1)}$ is
good.
\end{proof}
Let $S_w$ denote the preimage in $\Bun_{\calG(0,\Psi)}$ of the
stratum in $\Bun_{\calG(0,1)}$ corresponding to $w$. Then
$S_1\simeq\bbA^1$, and for a simple reflection $s$, $S_1\cup
S_s\simeq\bbP^1$. In particular, any regular function on
$\Bun_{\calG(0,\Psi)}$ is constant.

Let us describe $\on{Op}_{{^L}\frakg}(X)_{\calG(0,\Psi)}$ in this
case.

At $0\in\bbP^1$, $K_0=\calG(0,\Psi)(\calO_0)$ is the the Iwahori
subgroup $I^{op}$ of $G(F_0)$, which is $\on{ev}^{-1}(B^-)$ under
the evaluation map $\on{ev}: G(\calO_0)\to G$, and
\[\on{Vac}_0=\Ind_{\Lie I^{op} + \bbC \bf{1}}^{\hat{\frakg}_{\on{crit},x}}(\bbC_{-\rho}).\]
is just the Verma module $\bbM_{-\rho}$ of highest weight $-\rho$
($-\rho$ is anti-dominant w.r.t. $B^-$), and it is known
(\cite[Chap. 9]{F}) that
$\on{Fun}\on{Op}_{{^L}\frakg}(D^\times_0)\to\End (\bbM_{-\rho})$
induces an isomorphism
\[\on{Fun}\on{Op}_{{^L}\frakg}(D_0)_{\varpi(0)}\simeq\End(\bbM_{-\rho}),\]
where $\on{Op}_{{^L}\frakg}(D_0)_{\varpi(0)}$ is the scheme of
${^L}\frakg$ opers on $D_0$ with regular singularities and zero
residue. Let us describe this space in concrete terms. Let $f=\sum_i
X_{-\al_i}$ be the sum of root vectors $X_{-\al_i}$ corresponding
negative simple roots $-\al_i$ of ${^L}\frakg$. After choosing a
uniformizer $z$ of the disc $D_0$,
$\on{Op}_{{^L}\frakg}(D_0)_{\varpi(0)}$ is the space of operators of
the form
\[\partial_z+ \frac{f}{z}+{^L}\frakb[[z]]\]
up to ${^L}U(\calO_0)$-gauge equivalence.

At $\infty\in\bbP^1$, $K_\infty=\calG(0,\Psi)(\calO_\infty)=I_\Psi$.
Denote
\[\bbW_{univ}=\on{Vac}_\infty=\Ind_{\Lie I_{\Psi}+\bbC \bf{1}}^{\hat{\frakg}_{\on{crit},x}}(\on{triv}).\]
It is known (\cite[Lemma 5]{FF}) that
$\on{Fun}\on{Op}_{{^L}\frakg}(D^\times_\infty)\to\End (\bbW_{univ})$
factors as
\[\on{Fun}\on{Op}_{{^L}\frakg}(D^\times_\infty)\twoheadrightarrow\on{Fun}\on{Op}_{{^L}\frakg}(D_\infty)_{1/h}\hookrightarrow\End (\bbW_{univ}),\]
where $\on{Op}_{{^L}\frakg}(D_\infty)_{1/h}$ is the scheme of opers
with slopes $\leq 1/h$ (as ${^L}G$-local systems) and $h$ is the
Coxeter number of ${^L}G$. To give a concrete description of this
space, let us complete $f$ to an $\fraks\frakl_2$-triple
$\{e,\gamma,f\}$ with $e\in {^L}B$. Let ${^L}\frakg^e$ be the
centralizer of $e$ in ${^L}\frakg$, and decompose
${^L}\frakg^e=\oplus_{i=1}^{\ell}{^L}\frakg^e_i$ according to the
principal grading by $\gamma$. Let $d_i=\deg {^L}\frakg^e_i$. Then
after choosing a uniformizer $z$ on the disc $D_\infty$,
$\on{Op}_{{^L}\frakg}(D_\infty)_{1/h}$ is the space of operators of
the form
\[\partial_z+f+\sum_{i=1}^{\ell-1}+ z^{-d_i-1}({^L}\frakg^e_i)[[z]]+z^{-d_\ell-2}({^L}\frakg^e_\ell)[[z]]\]
up to ${^L}U(\calO_\infty)$-gauge equivalence.

Therefore, $\on{Op}_{{^L}\frakg}(X)_{\calG(0,\Psi)}$ is isomorphic
to
\[\on{Op}_{{^L}\frakg}(X)_{(0,\on{RS}),(\infty,1/h)}:=\on{Op}_{{^L}\frakg}(D_\infty)_{1/h}\times_{\on{Op}_{{^L}\frakg}(D_\infty^\times)}\on{Op}_{{^L}\frakg}(\bbG_m)\times_{\on{Op}_{{^L}\frakg}(D_0^\times)}\on{Op}_{{^L}\frakg}(D_0)_{\varpi(0)}.\]
As observed in \cite{FG},
$\on{Op}_{{^L}\frakg}(X)_{(0,\on{RS}),(\infty,1/h)}\simeq\bbA^1$.
Indeed, let $z$ be the global coordinate on
$\bbA^1=\bbP^1-\{\infty\}$. Then the space of such opers are of the
form
\[\nabla=\partial_z+ \frac{f}{z}+\lambda e_{\theta},\]
where $f$ is the sum of root vectors corresponding to negative
simple roots and $e_{\theta}$ is a root vector corresponding to the
highest root $\theta$.

According to \S \ref{recon}, there is a ring homomorphism
\[h_\nabla:\bbC[\la]\to\Gamma(\Bun_{\calG(0,\Psi)},D').\]
Let us describe this mapping more explicitly. Recall that there is
an action of $I(1)/I_\Psi\simeq \bbG_a$ on $\Bun_{\calG(0,\Psi)}$,
and therefore the action of $\bbG_a$ induces an algebra homomorphism
\[a: U(\Lie I(1)/I_\Psi)\to\Gamma(\Bun_{\calG(0,\Psi)},D').\]
\begin{lem}
We have $h_\nabla(\la)=a(\xi)$ for some non-zero element $\xi\in
\Lie I(1)/I_\Psi\simeq\bbC$.
\end{lem}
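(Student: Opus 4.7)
The plan is to show $h_\nabla(\la) = a(\xi)$ by matching principal symbols and then resolving a constant ambiguity.

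\emph{Step 1: Filtration degree of $\la$.} I will argue that $\la$ has filtration degree one in $\on{Fun}\on{Op}_{^L\frakg}(X)_{\calG(0,\Psi)} \simeq \bbC[\la]$. The dimension count $\dim \Bun_{\calG(0,1)} = \dim G - \dim B - \dim U = 0$, together with the principal-$\bbG_a$-bundle structure $\Bun_{\calG(0,\Psi)} \to \Bun_{\calG(0,1)}$ for $\bbG_a = I(1)/I_\Psi$, gives by symplectic reduction that $T^*\Bun_{\calG(0,\Psi)} /\!/ \bbG_a = T^*\Bun_{\calG(0,1)}$ is a point. Hence the $\bbG_a$-invariant subalgebra $\on{Fun}(T^*\Bun_{\calG(0,\Psi)})^{\bbG_a}$ equals $\bbC[\mu]$ for the fiberwise-linear moment map $\mu$, and the image of the classical Hitchin map $h^{cl}$ (which Poisson-commutes with $\mu$, hence is $\bbG_a$-invariant) lies in $\bbC[\mu]$, i.e., $h^{cl}(\la) = p(\mu)$ for some polynomial $p$. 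An explicit computation on the open stratum $S_1$ --- where the Higgs analog of $\partial_z + f/z + \la e_\theta$ reduces at $\infty$ to $\la e_\theta$ and pairs nontrivially with the Whittaker direction $\xi_0 \in \Lie I(1)/I_\Psi$ via the nondegeneracy of the Killing pairing $\langle e_\theta, e_{-\theta}\rangle$ --- shows $\deg p = 1$, so $h^{cl}(\la) = c\mu$ for some nonzero $c \in \bbC$.

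\emph{Step 2: Symbol comparison.} Strict compatibility of $h_\nabla$ with the filtrations then makes $h_\nabla(\la)$ a first-order differential operator on $\Bun_{\calG(0,\Psi)}$ with principal symbol $c\mu$, i.e., $c$ times the vector field generating the $\bbG_a$-action. Since $a(\xi_0)$ has the same symbol (under our chosen normalization of $\xi_0$), the difference $h_\nabla(\la) - c\,a(\xi_0)$ is a global regular function on $\Bun_{\calG(0,\Psi)}$. Using $S_1 \cup S_s \simeq \bbP^1$, any such function is a constant $k \in \bbC$, so $h_\nabla(\la) = c\,a(\xi_0) + k$.

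\emph{Step 3: Killing the constant.} It remains to show $k = 0$; then $\xi := c\xi_0$ is the desired nonzero element of $\Lie I(1)/I_\Psi$. I expect this to follow by restricting both sides to $S_1 \simeq \bbA^1$ after a $\bbG_a$-equivariant trivialization of $\omega_{\Bun_{\calG(0,\Psi)}}^{1/2}$ (available since $\bbG_a$ acts freely on $S_1$ and the square root inherits a canonical equivariant structure from the critical twist): then $a(\xi_0)$ becomes $\partial_u$ with no zero-order term, while the Feigin--Frenkel action of $\la$ on the Whittaker-vacuum $\bbW_{univ}$ realizes $h_\nabla(\la)$ as a pure derivation on the image of the Whittaker generator, forcing its zero-order term to vanish as well. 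The main obstacle I anticipate is Step 1 --- the identification of the Hitchin Hamiltonian with the moment map --- which uses in an essential way the equality $\dim \Bun_{\calG(0,1)} = 0$ together with the nontrivial pairing between the Higgs value at $\infty$ and the Whittaker direction.
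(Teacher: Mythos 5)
Your Steps 1 and 2 track the paper's argument: the symbol of $\la$ under the classical Hitchin map is identified with the moment map for the $I(1)/I_\Psi\simeq\bbG_a$-action (the paper asserts this identification directly, using that $\on{Hitch}(X)_{\calG(0,\Psi)}\simeq\bbA^1$ by the explicit description of the relevant spaces of sections), and then the difference $h_\nabla(\la)-a(\xi)$ is a global regular function on $\Bun_{\calG(0,\Psi)}$, hence a constant by the observation $S_1\cup S_s\simeq\bbP^1$. Up to here you are in agreement with the paper, modulo the fact that your symplectic-reduction justification is only heuristic (the reduction $T^*\Bun_{\calG(0,1)}$ is a zero-dimensional stack with strata indexed by the affine Weyl group, not a point, so $\on{Fun}(T^*\Bun_{\calG(0,\Psi)})^{\bbG_a}=\bbC[\mu]$ needs the ``any regular function is constant'' input rather than reduction to a point).

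The genuine gap is Step 3, which is exactly where the content of the lemma sits. Your proposed argument --- that after an equivariant trivialization of $\omega^{1/2}$ over $S_1$ the Feigin--Frenkel action of $\la$ on $\bbW_{univ}$ ``realizes $h_\nabla(\la)$ as a pure derivation,'' forcing the zero-order term to vanish --- is an assertion of the conclusion, not a proof. Determining whether the endomorphism of $\bbW_{univ}$ corresponding to $\la$ is $\xi\cdot v_0$ or $\xi\cdot v_0 + k v_0$ is precisely the normalization question at issue; it amounts to a nontrivial computation in the Feigin--Frenkel identification $\on{Fun}\Op_{{}^L\frakg}(D_\infty)_{1/h}\hookrightarrow\End(\bbW_{univ})$, and nothing you have written rules out a nonzero constant term. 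The paper avoids this local computation entirely with a global argument: if $h_\nabla(\la)=a(\xi)-c$ with $c\neq 0$, then $\Aut=\calD'/\calD'\la$ is $(I(1)/I_\Psi, c\Psi)$-equivariant with $c\Psi$ nondegenerate, so by \cite[Theorem 4(1)]{HNY} its Hecke eigenvalue must be irregular at $\infty$; but Theorem \ref{BDmain} says the eigenvalue is the oper $\partial_z+\frac{f}{z}$, which is regular singular --- contradiction, hence $c=0$. You should either supply the actual Whittaker-module computation you are gesturing at, or replace Step 3 by this regularity/irregularity argument.
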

\begin{proof}Consider the associated graded
$h^{cl}:\on{gr}\bbC[\la]\to \Gamma(T^*\Bun_{\calG(0,\Psi)},\calO)$,
which is the classical Hitchin map. Recall that the filtration on
$\bbC[\la]$ comes from the existence of $\hbar$-opers, and therefore
the symbol of $\la$ is identified with a coordinate function on
\begin{eqnarray*}&&\on{Hitch}(X)_{\calG(0,\Psi)}\\ &\simeq&\bigoplus_{i=1}^{\ell-1}\Gamma(\bbP^1,\Omega^{d_i+1}((d_i)\cdot
0+
(d_i+1)\cdot\infty)\bigoplus\Gamma(\bbP^1,\Omega^{d_\ell+1}((d_\ell)\cdot
0+(d_\ell+2)\cdot\infty))\\ &\simeq& \bbA^1.\end{eqnarray*} On the
other hand, it is easy to identify the Hitchin map with the moment
map associated to the action of $I(1)/I_\Psi$ on
$\Bun_{\calG(0,\Psi)}$. Therefore, $h_\nabla(\la)=a(\xi)-c$ for some
constant $c$. Up to normalization, we can assume that
$d\Psi(\xi)=1$. We show that $c=0$. Indeed, consider the automorphic
D-module $\Aut=\calD'/\calD'\la$ on $\Bun_{\calG(0,\Psi)}$. It is
$I(1)/I_\Psi$-equivariant against $c\Psi$, with eigenvalue the local
system on $\bbG_m$ represented by the connection
$\partial_z+\frac{f}{z}$ by Theorem \ref{BDmain}, which is regular
singular. However, if $c\neq 0$, by \cite[Theorem 4(1)]{HNY}, the
eigenvalue for this $\Aut$ should be irregular at $\infty$.
Contradition.
\end{proof}

Finally, for any
$\chi\in\on{Op}_{{^L}\frakg}(X)_{(0,\on{RS}),(\infty,1/h)}$ given by
$\la=c$, $\Aut_{\calE}=\calD'/\calD'(\la-c)$ is a D-module on
$\Bun_{\calG(0,\Psi)}$, equivariant against $(I(1)/I_\Psi,c\Psi)$.
By the uniqueness of such D-modules on $\Bun_{\calG(0,\Psi)}$ (same
argument as in \cite[Lemma 2.3]{HNY}), this must be the same as the
automorphic D-module as constructed in \cite{HNY}. We are done.

\end{document}